\newtheorem{theorem}{Theorem}[section]
\theoremstyle{definition}
\newtheorem{example}[theorem]{Example}
\theoremstyle{remark}
\newtheorem{remark}[theorem]{Remark}
\numberwithin{equation}{section}
\begin{document}

\title[Extremal Lipschitz extensions and Jensen's equations]{Explicit solutions of Jensen's auxiliary equations via extremal Lipschitz extensions}



\author[F. Charro]{Fernando Charro}
\address{Department of Mathematics, Wayne State University, Detroit, MI 48202, USA}

\email{fcharro@wayne.edu}
\thanks{Partially supported by MINECO grants MTM2016-80474-P
and MTM2017-84214-C2-1-P, Spain.
}


\subjclass[2010]{Primary 
35J70,  
46T20,  
49K20, 
54E40
}

\date{}

\dedicatory{}

\commby{}

\begin{abstract}
In this note we prove that McShane and Whitney's Lipschitz extensions are viscosity solutions of Jensen's auxiliary equations,
known to have a key role in Jensen's celebrated proof of uniqueness of infinity harmonic functions, and therefore of Absolutely Minimizing Lipschitz Extensions.  To the best of the author's knowledge, this result does not appear to be known in the literature in spite of the vast amount of work around the~topic.
\end{abstract}

\maketitle

\section{Introduction}

Given a Lipschitz function $F:\partial\Omega\to\mathbb{R}$ with Lipschitz constant $\lambda$ one can consider the problem of finding a Lipschitz extension of the function to the interior of $\Omega$.  This problem has received great attention for many years, we refer the interested reader to \cite{aronsson2004tour} for a survey on the topic.

Notice that the best Lipschitz constant one can hope for the extension is $\lambda$ itself. This Lipschitz constant is achieved by the explicit extensions
\begin{equation}\label{McShane}
 \overline{u} (x)
 =
 \inf_{z\in\partial\Omega}
 \big(
  F(z)+\lambda|x-z|
 \big)
\end{equation}
and
\begin{equation}\label{Whitney}
 \underline{u}(x)
 =
  \sup_{z\in\partial\Omega}
 \big(
 F(z)-\lambda|x-z|
 \big) 
\end{equation}
 due to McShane \cite{mcshane1934} and Whitney \cite{Whitney1934}, respectively.  It is easy to see that $\overline{u}$, $\underline{u}$ coincide with $F$ at $\partial \Omega$ and are Lipschitz continuous with constant $\lambda$. 
 In fact, $\overline{u}=F$ on $\partial\Omega$ follows by noticing that 
for all $x\in\partial\Omega$, the
definition of $\overline{u}$ and the
 Lipschitz continuity of $F$ yield
\begin{equation} \label{proof.u=F}
\overline{u}(x)\leq F(x)\leq F(z)+\lambda\, |x-z|,\quad \textrm{for all}\ z\in \partial\Omega, 
\end{equation}
and similarly for $\underline{u}$.
On the other hand, the Lipschitz condition for $\overline{u}$  can be verified observing that if $x,y\in\mathbb{R}^n$,  then
\begin{equation}\label{proof.Lip.condition}
\overline{u}(x)\leq
\inf_{z\in\partial\Omega}
\big(
F(z)+\lambda(|y-z|+|x-y|)
\big)
=\overline{u}(y)+\lambda|x-y|,
\end{equation}
and then reversing the roles of $x,y$ (the case of $\underline{u}$ is similar).

 Furthermore, these extensions are extremal in the sense that any other Lipschitz extension $u$ satisfies 
\begin{equation}\label{AMLE.sandwhich}
\underline{u}\leq u\leq \overline{u}.
\end{equation}
To see this, notice that by the Lipchitz continuity of $u$,
\[
u(z)-\lambda\, |x-z|\leq u(x)\leq u(z)+\lambda\, |x-z|
\]
for all $x\in\mathbb{R}^n$ and $z\in \partial\Omega$ (note that $u(z)=F(z)$).

Whenever McShane and Whitney's Lipschitz extensions, $\underline{u}$ and $\overline{u}$ coincide, \eqref{AMLE.sandwhich} provides uniqueness and optimality of the extension. However, this rarely happens, see \cite{aronsson2004tour}.
Then, a natural question arises, how to find the ``best" extension of $F:\partial\Omega\to\mathbb{R}$ to the interior of $\Omega$. Or, in other words, how to find
$u$ with the least possible Lipschitz constant in every open set whose closure is compactly contained in $\Omega$. This extension exists and is unique, and is called an Absolutely Minimizing Lipschitz Extension (AMLE) following \cite{aronsson1967}.
It turns out that such AMLE is infinity harmonic (see \cite{aronsson2004tour, jensen1993}), i.e., it satisfies $-\Delta_\infty u=0$ in $\Omega$ in the viscosity sense, where,
\[
\Delta_\infty u(x)=
\left<D^2u(x)\, \nabla u(x),\nabla u(x)\right>
\]
is the well-known infinity Laplace operator (see  \cite{Lindqvist2014} for a survey of its applications).

In this note we prove that McShane and Whitney's extensions are viscosity solutions of Jensen's auxiliary equations,
known to have a key role in Jensen's celebrated proof of uniqueness of infinity harmonic functions (and hence of AMLE) in \cite{jensen1993}.  This question arose in connection with a modified Tug-of-War game studied in \cite{Anton.Charro.Wang} which models Jensen's auxiliary equations in graphs.
To the best of our knowledge, this result does not seem to be known in the literature in spite of the vast amount of work around the topic.

In the sequel, given  $g:K\subset\mathbb{R}^n \to \mathbb{R}$, Lipschitz continuous on $K$,  we will denote by $L_g(K)$  the smallest constant $\lambda\geq0$ for which $|g(x) - g(y)| \leq \lambda|x - y|$ for all $x, y \in K$. If $\lambda\geq L_g(K) $, then we will say that $\lambda$ is ``a Lipschitz constant for $g$".

The main result of the paper is the following.

\begin{theorem}\label{explicit.sols.Jensens.eqs}
Let  $F:\partial\Omega\to\mathbb{R}$ be a Lipschitz function with least Lipschitz constant $L_F(\partial\Omega)$. Then, for every $\lambda \geq L_F(\partial\Omega)$, McShane's extension
$\overline{u}$ defined in  \eqref{McShane} is the unique  viscosity solution of 
\begin{equation}\label{Jenesen.min.equation}
\begin{cases}
\min\left\{|\nabla u(x)|-\lambda,-\Delta_\infty u(x)\right\}=0 & \textrm{in}\ \Omega\\
u(x)=F(x) & \textrm{on}\ \partial\Omega.
\end{cases}
\end{equation}
Similarly, Whitney's extension $\underline{u}$ defined in \eqref{Whitney} is the unique viscosity solution of
\begin{equation}\label{Jenesen.max.equation}
\begin{cases}
\max\left\{\lambda-|\nabla u(x)|,-\Delta_\infty u(x)\right\}=0  & \textrm{in}\ \Omega\\
u(x)=F(x) & \textrm{on}\ \partial\Omega.
\end{cases}
\end{equation} 
On the other hand, whenever $\lambda < L_F(\partial\Omega)$, the functions $\overline{u},\underline{u}$ still satisfy the equations in \eqref{Jenesen.min.equation} and \eqref{Jenesen.max.equation} in the interior of $\Omega$ but fail to achieve the boundary condition $u=F$ on $\partial\Omega$.
\end{theorem}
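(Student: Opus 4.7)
My plan is to split the argument into three parts: verifying that $\overline{u}$ is a viscosity subsolution and a viscosity supersolution of the interior equation in \eqref{Jenesen.min.equation}, checking the boundary condition when $\lambda \geq L_F(\partial\Omega)$, and invoking a comparison principle for uniqueness. The corresponding statement for $\underline{u}$ and \eqref{Jenesen.max.equation} will then follow from the symmetry $F \mapsto -F$, since the Whitney extension associated with $F$ equals minus the McShane extension associated with $-F$, and the two equations are exchanged under $u \mapsto -u$.

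For the subsolution half, the key input is that \eqref{proof.Lip.condition} already shows $\overline{u}$ is $\lambda$-Lipschitz on $\mathbb{R}^n$. If $\varphi \in C^2$ touches $\overline{u}$ from above at $x_0 \in \Omega$, then the relations $\varphi \geq \overline{u}$, $\varphi(x_0) = \overline{u}(x_0)$, combined with $\overline{u}(x) \geq \overline{u}(x_0) - \lambda|x - x_0|$, yield $\varphi(x) - \varphi(x_0) \geq -\lambda|x - x_0|$; computing one-sided directional derivatives in any unit direction then forces $|\nabla \varphi(x_0)| \leq \lambda$. Hence $\min\{|\nabla \varphi(x_0)| - \lambda,\,-\Delta_\infty \varphi(x_0)\} \leq 0$, which is the desired subsolution inequality.

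The supersolution part is where the specific structure of \eqref{McShane} really enters. For each $x_0 \in \Omega$, the infimum defining $\overline{u}(x_0)$ is attained at some $z_0 \in \partial\Omega$ by compactness, and the associated cone $\psi_{z_0}(x) := F(z_0) + \lambda|x - z_0|$ satisfies $\psi_{z_0} \geq \overline{u}$ with equality at $x_0$ and is smooth there because $z_0 \in \partial\Omega$ while $x_0 \in \Omega$. If $\varphi$ touches $\overline{u}$ from below at $x_0$, then $\psi_{z_0}$ touches $\varphi$ from above at $x_0$, so $\nabla \varphi(x_0) = \nabla \psi_{z_0}(x_0) = \lambda(x_0 - z_0)/|x_0 - z_0|$, whence $|\nabla \varphi(x_0)| = \lambda$, and $D^2 \varphi(x_0) \leq D^2 \psi_{z_0}(x_0)$. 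A direct computation shows $\Delta_\infty \psi_{z_0} \equiv 0$ away from $z_0$, so combining the Hessian inequality with the equality of gradients yields $\Delta_\infty \varphi(x_0) \leq \Delta_\infty \psi_{z_0}(x_0) = 0$. Thus both $|\nabla \varphi(x_0)| - \lambda = 0$ and $-\Delta_\infty \varphi(x_0) \geq 0$, and the supersolution inequality follows.

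The boundary condition when $\lambda \geq L_F(\partial\Omega)$ is exactly \eqref{proof.u=F}, and uniqueness then follows from the comparison principle for Jensen's auxiliary equations proved in \cite{jensen1993}. Finally, when $\lambda < L_F(\partial\Omega)$, observe that the two interior arguments above never used a relationship between $\lambda$ and $L_F$, so the PDE continues to hold throughout $\Omega$; on the other hand, $\overline{u}$ is always $\lambda$-Lipschitz on $\mathbb{R}^n$, so if $\overline{u} = F$ on $\partial\Omega$ then $F$ would itself be $\lambda$-Lipschitz, contradicting $\lambda < L_F(\partial\Omega)$. The main technical point in this plan is producing, at each interior point, a test cone that touches $\overline{u}$ from above with matching first- and second-order data; once that is secured, the rest reduces to routine manipulation of viscosity inequalities together with Jensen's comparison principle.
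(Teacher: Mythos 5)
Your proof is correct. The supersolution half coincides with the paper's own (second) argument: locate an optimal vertex $\hat z\in\partial\Omega$, note that the cone $F(\hat z)+\lambda|x-\hat z|$ then touches the test function from above at the contact point, and transfer the gradient identity $|\nabla\varphi|=\lambda$ together with the Hessian inequality to get $-\Delta_\infty\varphi\geq 0$. Where you diverge is the subsolution half: you deduce $|\nabla\varphi(x_0)|\leq\lambda$ directly from the $\lambda$-Lipschitz bound \eqref{proof.Lip.condition} via one-sided directional derivatives, which is precisely the mechanism of Theorem \ref{second.theorem} (and the paper explicitly notes that Theorem \ref{second.theorem} could be invoked here); the paper instead presents a finite-difference argument comparing $\overline{u}(\hat x)$ with $\min_{\overline{B}_\epsilon(\hat x)}\overline{u}$ and Taylor-expanding, because that computation doubles as a consistency proof for the discrete schemes \eqref{disc.scheme.lower} and \eqref{disc.scheme.upper} discussed in the introduction. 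Your route is shorter; the paper's buys the numerical-analysis connection. Two further points in your favor: the reduction of the $\underline{u}$ statement to the $\overline{u}$ one via $F\mapsto -F$, $u\mapsto -u$ is a cleaner account than the paper's ``the proof is similar,'' and your observation that the boundary condition must fail when $\lambda<L_F(\partial\Omega)$ --- since $\overline{u}=F$ on $\partial\Omega$ would make $F$ itself $\lambda$-Lipschitz --- is an actual proof of a point the paper merely asserts. One implicit assumption you share with the paper: attainment of the infimum at $z_0$ uses compactness of $\partial\Omega$ (i.e., boundedness of $\Omega$), which neither text states explicitly.
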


As a motivation, we have the following example.
\begin{example}
Let $\lambda>0,$ $\Omega\subset\mathbb{R}^n$ and consider $u_\lambda(x)=\lambda\,\textrm{dist}(x,\partial\Omega)$.
 It can be checked by direct computation  that $u_\lambda$
 is the  unique viscosity solution to
\[
\left\{
\begin{split}
&\min\{|\nabla u|-\lambda,-\Delta_\infty u\}=0\quad\text{in}\ \Omega,\\
&u=0\quad\text{on}\ \partial\Omega.
\end{split}
\right.
\]
This agrees with Theorem \ref{explicit.sols.Jensens.eqs} since for every $\lambda\geq0=L_F(\partial\Omega)$ we have
\[
 \overline{u} (x)
 =
 \lambda
  \inf_{z\in\partial\Omega}
|x-z|
=
\lambda\,\textrm{dist}(x,\partial\Omega).
\]
\end{example}

The fact that an AMLE is infinity harmonic (again, see \cite{aronsson2004tour, jensen1993}) makes it  a subsolution of
\eqref{Jenesen.min.equation} 
  and a supersolution of \eqref{Jenesen.max.equation}, respectively.
  Then, the comparison principle for Jensen's equations \eqref{Jenesen.min.equation} and \eqref{Jenesen.max.equation} (see  \cite[Theorems 2.1 and 2.15]{jensen1993}) offers another perspective on    \eqref{AMLE.sandwhich}, which follows by comparison.  
 In the next result we show that this is a general fact that does not depend on the infinity-harmonicity of the AMLE, i.e., we prove that any Lipschitz extension
 is a subsolution of \eqref{Jenesen.min.equation}  and a supersolution of \eqref{Jenesen.max.equation}, respectively.

\begin{theorem}\label{second.theorem}
 Let  $F:\partial\Omega\to\mathbb{R}$ be  Lipschitz continuous, and let $u$ be any Lipschitz extension of $F$ to $\Omega$, i.e., a Lipschitz function $u:\Omega\to\mathbb{R}$ such that 
 $u=F$ on $\partial\Omega$ and has Lipschitz constant $L_u(\Omega)=L_F(\partial\Omega)$. Then, for every $\lambda\geq L_F(\partial\Omega)$
\begin{equation}\label{Jenesen.min.equation.u}
\begin{cases}
\min\left\{|\nabla u(x)|-\lambda,-\Delta_\infty u(x)\right\}\leq0 & \textrm{in}\ \Omega\\
u(x)=F(x) & \textrm{on}\ \partial\Omega.
\end{cases}
\end{equation}
and 
\begin{equation}\label{Jenesen.max.equation.u}
\begin{cases}
\max\left\{\lambda-|\nabla u(x)|,-\Delta_\infty u(x)\right\}\geq0 & \textrm{in}\ \Omega\\
u(x)=F(x) & \textrm{on}\ \partial\Omega.
\end{cases}
\end{equation} 
in the viscosity sense.
\end{theorem}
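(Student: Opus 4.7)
The plan is to reduce both assertions to a single elementary viscosity fact: any $\lambda$-Lipschitz function $u$ forces every test function touching it (from above or below) to have gradient of length at most $\lambda$ at the touching point. Once this is in place, both inequalities are immediate, and the $-\Delta_\infty\varphi$ term never needs to be examined.

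\textbf{Step 1: Boundary condition.} This is given by hypothesis, since $u=F$ on $\partial\Omega$ is part of the definition of a Lipschitz extension. So only the viscosity inequalities in $\Omega$ require proof.

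\textbf{Step 2: The gradient estimate at touching points.} Suppose $\varphi\in C^{2}$ and $u-\varphi$ attains a local maximum at some $x_{0}\in\Omega$. Then for $x$ near $x_{0}$,
\[
\varphi(x)-\varphi(x_{0})\leq u(x)-u(x_{0})\leq L_{u}(\Omega)\,|x-x_{0}|.
\]
Inserting $x=x_{0}+tv$ with $|v|=1$ and $t\downarrow 0$, Taylor expansion yields $\nabla\varphi(x_{0})\cdot v\leq L_{u}(\Omega)$; taking the supremum over unit vectors gives $|\nabla\varphi(x_{0})|\leq L_{u}(\Omega)\leq\lambda$. The analogous argument when $u-\varphi$ has a local minimum at $x_{0}$ uses the inequality $\varphi(x)-\varphi(x_{0})\geq u(x)-u(x_{0})\geq -L_{u}(\Omega)\,|x-x_{0}|$ and leads to the same conclusion $|\nabla\varphi(x_{0})|\leq\lambda$.

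\textbf{Step 3: Conclusion.} For the subsolution inequality \eqref{Jenesen.min.equation.u}, pick $\varphi$ touching $u$ from above at $x_{0}$. By Step 2, $|\nabla\varphi(x_{0})|-\lambda\leq 0$, hence
\[
\min\bigl\{|\nabla\varphi(x_{0})|-\lambda,\,-\Delta_{\infty}\varphi(x_{0})\bigr\}\leq |\nabla\varphi(x_{0})|-\lambda\leq 0,
\]
regardless of the sign of $-\Delta_{\infty}\varphi(x_{0})$. For the supersolution inequality \eqref{Jenesen.max.equation.u}, pick $\varphi$ touching $u$ from below at $x_{0}$. Again $|\nabla\varphi(x_{0})|\leq\lambda$ by Step 2, so $\lambda-|\nabla\varphi(x_{0})|\geq 0$ and
\[
\max\bigl\{\lambda-|\nabla\varphi(x_{0})|,\,-\Delta_{\infty}\varphi(x_{0})\bigr\}\geq \lambda-|\nabla\varphi(x_{0})|\geq 0.
\]

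\textbf{Main obstacle.} There is essentially none: the whole argument rests on the trivial observation that a $\lambda$-Lipschitz function is both a viscosity subsolution of $|\nabla u|-\lambda\leq 0$ and a viscosity supersolution of $\lambda-|\nabla u|\geq 0$, and both Jensen-type inequalities are controlled by the gradient term alone. The only point worth emphasizing in the write-up is that the infinity-Laplacian term plays no role; this is precisely what makes Theorem \ref{second.theorem} a statement about arbitrary Lipschitz extensions rather than AMLEs or infinity-harmonic functions.
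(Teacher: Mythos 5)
Your proof is correct and follows essentially the same route as the paper: the Lipschitz bound on $u$, the touching condition, and a first-order Taylor expansion force $|\nabla\varphi(x_0)|\le L_u(\Omega)\le\lambda$, after which the $-\Delta_\infty\varphi$ term never needs to be examined (the paper picks the single direction $v=-\nabla\varphi(x_0)/|\nabla\varphi(x_0)|$ where you optimize over all unit directions, which is the same computation). One slip to fix in Step 2: the two inequality chains are attached to the wrong cases --- a local maximum of $u-\varphi$ gives $\varphi(x)-\varphi(x_0)\ge u(x)-u(x_0)\ge -L_u(\Omega)\,|x-x_0|$, and a local minimum gives the reversed chain; since either chain yields the same bound $|\nabla\varphi(x_0)|\le L_u(\Omega)$, the conclusion is unaffected.
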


This can also be understood in view of Rademacher's Theorem: A Lipschitz function $u$ on an open subset of the Euclidean space is differentiable almost everywhere  and the number $\|\nabla u\|_\infty$ is bounded from above by
the Lipschitz constant of $u$ (if in addition the domain is convex, then the least Lipschitz constant equals $\|\nabla u\|_\infty$).

\begin{remark}
Theorems \ref{explicit.sols.Jensens.eqs} and \ref{second.theorem} also hold with $ \Delta_\infty^N u $ in place of $ \Delta_\infty u $, where
\begin{equation}\label{infinityLaplacian_normalized}
\Delta_\infty^N u(x):=
\begin{cases}
\left<D^2u(x)\, \frac{\nabla u(x)}{|\nabla u(x)|},\frac{\nabla u(x)}{|\nabla u(x)|}\right>, & \textnormal{if}\ \nabla u(x)\neq0 \\[0.5em]
\lim_{y\to x}\frac{2(u(y)-u(x))}{|y-x|^2}, & \textnormal{otherwise}
\end{cases}
\end{equation}
is the normalized infinity Laplacian, well known for its role in the modeling of random Tug-of-War games, see   \cite{Lindqvist2014} and the references therein.
\end{remark}

We would like to finish this introduction  pointing out that the Taylor expansion arguments in the proof of Theorem \ref{explicit.sols.Jensens.eqs} have an interesting connection with the numerical analysis of equations  \eqref{Jenesen.min.equation} and \eqref{Jenesen.max.equation}.
More precisely, equations \eqref{Jenesen.min.equation} and \eqref{Jenesen.max.equation} can be respectively approximated by 
\begin{equation}\label{disc.scheme.lower}
\begin{split}
\min\Bigg\{
\frac{1}{\epsilon}&\left(
u(x)-\inf_{y\in\overline{B}_\epsilon(x)\cap\overline{\Omega}}u(y)-\epsilon \lambda
\right),\\
&\hspace{50pt}\frac{1}{\epsilon^2}\left(2 u(x)-\sup_{y\in\overline{B}_\epsilon(x)\cap\overline{\Omega}}u(y)-\inf_{y\in\overline{B}_\epsilon(x)\cap\overline{\Omega}}u(y)
\right)\Bigg\}=0
\end{split}
\end{equation}
and
\begin{equation}\label{disc.scheme.upper}
\begin{split}
\max\Bigg\{
\frac{1}{\epsilon}&\left(
u(x)-\sup_{y\in\overline{B}_\epsilon(x)\cap\overline{\Omega}}u(y)+\epsilon \lambda
\right),\\
&\hspace{50pt}\frac{1}{\epsilon^2}\left(2 u(x)-\sup_{y\in\overline{B}_\epsilon(x)\cap\overline{\Omega}}u(y)-\inf_{y\in\overline{B}_\epsilon(x)\cap\overline{\Omega}}u(y)
\right)\Bigg\}=0,
\end{split}
\end{equation}
which can be regarded as discrete elliptic schemes in the sense of \cite{Oberman2006} (and, therefore, monotone in the sense of \cite{barles1991}). 

Moreover, in a similar way to the Taylor expansion arguments in the proof of Theorem \ref{explicit.sols.Jensens.eqs}, one can  show that schemes \eqref{disc.scheme.lower} and \eqref{disc.scheme.upper} are   consistent (see \cite[Section 2]{barles1991} for the  definition). This means, roughly speaking, that the finite-difference operator converges in the viscosity sense towards the continuous operator of the PDE  as $\epsilon\to0$. 
 Monotonicity and consistency, altogether with stability are important requirements for convergence, as established in the seminal paper \cite{barles1991}. Informally, the authors in \cite{barles1991} prove that any monotone, stable, and consistent scheme converges  provided that the limiting equation satisfies a type of comparison principle known as ``strong uniqueness property", which is usually difficult to prove.

It seems an interesting question to tackle the convergence of schemes \eqref{disc.scheme.lower} and \eqref{disc.scheme.upper} and their numerical implementation; however,  we will not discuss that problem here.

\section{Proofs of Theorems \ref{explicit.sols.Jensens.eqs} and \ref{second.theorem}}

We proceed first to prove Theorem \ref{second.theorem}.
\begin{proof}[Proof of Theorem \ref{second.theorem}]
 Let us prove the result for \eqref{Jenesen.min.equation.u} since the proof for \eqref{Jenesen.max.equation.u} is similar. Let $\hat{x}\in\Omega$ and  $\phi\in C^2(\Omega)$ such that $\phi$ touches $u$ at $\hat{x}$  from above in a neighborhood of $\hat{x}$. Our goal is to prove
\begin{equation}\label{goal.subsol.Lip.exts.general.u} 
\min\left\{|\nabla \phi(\hat{x})|-\lambda,-\Delta_\infty \phi(\hat{x})\right\}\leq0. 
\end{equation}
Notice that we can assume $\nabla \phi(\hat{x})\neq0$ since we are done otherwise. Then, the contact condition and a Taylor expansion yield
\[
u(x)
\leq
\phi(x) = u (\hat{x})+\langle\nabla\phi(\hat{x}), x-\hat{x}\rangle+o(|x-\hat{x}|)
\quad
\textrm{as} 
\ x\to\hat{x}
\]
 Choose $x=\hat{x}-\alpha \nabla\phi(\hat{x})$, with $\alpha>0$ small enough. Then
\[
-\lambda\,\alpha|\nabla\phi(\hat{x})|
\leq
 u\big(\hat{x}-\alpha \nabla\phi(\hat{x})\big)- u (\hat{x})\leq-\alpha |\nabla\phi(\hat{x})|^2+o(\alpha)
\] 
 by the Lipschitz continuity of $u$. Dividing both sides by $-\alpha|\nabla\phi(\hat{x})|$ and letting $\alpha\to0$, we get $|\nabla\phi(\hat{x})|\leq \lambda$ as desired.
 \end{proof}

We present now  the proof of Theorem \ref{explicit.sols.Jensens.eqs}.

\begin{proof}[Proof of Theorem \ref{explicit.sols.Jensens.eqs}]
Assume first that $\lambda \geq L_F(\partial\Omega)$, and 
let us prove that $\overline{u}$ is a viscosity solution of \eqref{Jenesen.min.equation}. First, we will show the supersolution case. Observe that for every $z\in\partial\Omega$, the cone $C(x)=F(z)+\lambda|x-z|$ satisfies
 \[
 \min\left\{|\nabla C(x)|-\lambda,-\Delta_\infty  C(x)\right\}=0\quad \textrm{in}\ \Omega,
 \]
 in the classical sense,
 and therefore $\overline{u}$ is a viscosity supersolution in $\Omega$ because it is an infimum of supersolutions. Moreover, $\overline{u}=F$, as discussed in \eqref{proof.u=F}.

Alternatively,  let $\hat{x}\in\Omega$ and  $\phi\in C^2(\Omega)$ such that $\phi$ touches $\overline{u}$ at $\hat{x}$  from below in a neighborhood of $\hat{x}$. Our goal is to prove that
\begin{equation}\label{goal.limitPDE.supersol.Lip.ext.proof} 
\min\left\{|\nabla \phi(\hat{x})|-\lambda,-\Delta_\infty  \phi(\hat{x})\right\}\geq0. 
\end{equation}
Notice that by the Lipschitz continuity of $F$, the function $z\mapsto F(z)+\lambda|x-z|$ is continuous for each fixed $x$, and we have that
\[
\phi(\hat{x})=\overline{u}(\hat{x})
=
 \min_{z\in\partial\Omega}
 \big(
  F(z)+\lambda|\hat{x}-z|
 \big)
=
F(\hat{z})+\lambda|\hat{x}-\hat{z}|
\]
for some $\hat{z}\in\partial\Omega$. On the other hand, 
\[
\phi(x)\leq\overline{u}(x)
\leq F(\hat{z})+\lambda|x-\hat{z}|
\]
and we find that $\phi$ touches the cone $C(x)=F(\hat{z})+\lambda|x-\hat{z}|$ at $\hat{x}$  from below in a neighborhood of $\hat{x}$. Then, $\nabla \phi(\hat{x})=\nabla C(\hat{x})$ and $D^2 \phi(\hat{x})\leq D^2 C(\hat{x})$ and we deduce 
\[
-\Delta_\infty  \phi(\hat{x})\geq -\Delta_\infty  C(\hat{x})=0,
\qquad
\textrm{and}\qquad
|\nabla \phi(\hat{x})|=|\nabla C(\hat{x})|=\lambda,
\]
which, yield \eqref{goal.limitPDE.supersol.Lip.ext.proof}.

We proceed now to prove that $\overline{u}$ is a viscosity subsolution of \eqref{Jenesen.min.equation}. Notice that we can apply Theorem \ref{second.theorem}. However, we are going to show a different argument which shows an interesting
 connection with the numerical analysis of equations  \eqref{Jenesen.min.equation} and \eqref{Jenesen.max.equation}.

To this aim, let $\hat{x}\in\Omega$ and  $\phi\in C^2(\Omega)$ such that $\phi$ touches $\overline{u}$ at $\hat{x}$  from above in a neighborhood of $\hat{x}$. Our goal is to prove
\begin{equation}\label{goal.subsol.Lip.exts} 
\min\left\{|\nabla \phi(\hat{x})|-\lambda,-\Delta_\infty  \phi(\hat{x})\right\}\leq0. 
\end{equation}
By the continuity of $\overline{u}$ (see  \eqref{proof.Lip.condition}), for $\epsilon$ small enough we can write
\[
\begin{split}
\min_{x\in \overline{B}_\epsilon(\hat{x})}\overline{u}(x)
&=
\min_{x\in \overline{B}_\epsilon(\hat{x})}
 \inf_{z\in\partial\Omega}
 \big(
  F(z)+\lambda|x-z|
 \big)
 \\
&\geq
 \inf_{z\in\partial\Omega}
 \big(
  F(z)+\lambda|\hat{x}-z|-\epsilon \lambda
 \big)
   =
  \overline{u}(\hat{x})-\epsilon \lambda,
\end{split}
\]
where we have used that $|\hat{x}-z|\leq\epsilon+|x-z|$ for every $x\in \overline{B}_\epsilon(\hat{x})$. Therefore, 
\[
 \frac{1}{\epsilon}\bigg(
\phi(\hat{x})-\min_{x\in\overline{B}_\epsilon(\hat{x})}\phi(x)
\bigg)
\leq
 \frac{1}{\epsilon}\bigg(
 \overline{u}(\hat{x})-\min_{\overline{B}_\epsilon(\hat{x})}\overline{u}
\bigg)
\leq
\lambda.
\]
We claim that
\begin{equation}\label{varphi.at.minimum.Lip.exts}
\min_{x\in\overline{B}_\epsilon(\hat{x})}\phi(x)
=
\phi
\left(
\hat{x}-\epsilon\left[\frac{\nabla\phi(\hat{x})}{|\nabla\phi(\hat{x})|}+o(1)\right]
\right)
\quad\textnormal{as}\ \ \epsilon\to0.
\end{equation}
Then,  a first-order Taylor expansion yields
\[
\frac{1}{\epsilon}\bigg(
\phi(\hat{x})-\min_{x\in\overline{B}_\epsilon(\hat{x})}\phi(x)
\bigg)
=
|\nabla \phi(\hat{x})| +o(1) 
\quad\textnormal{as}\ \ \epsilon\to0
\]
and we deduce $|\nabla\phi(\hat x)|\leq \lambda$  and, hence, that \eqref{goal.subsol.Lip.exts} holds.

We proceed to prove  claim \eqref{varphi.at.minimum.Lip.exts} for the sake of completeness. Notice that we can assume $\nabla\phi(\hat x)\neq0$ since otherwise $|\nabla\phi(\hat x)|\leq \lambda$ holds  and there is nothing to prove. Write 
\[
\min_{x\in\overline{B}_\epsilon(\hat{x})}\phi(x)=\phi(\hat{x}-\epsilon v_\epsilon)
\]
 for some $v_\epsilon\in \overline{B}_1(0)$. 
Observe that  $|v_\epsilon|=1$ for every $\epsilon$ small enough because, otherwise,  there would be a subsequence  $\hat{x}-\epsilon_k v_{\epsilon_k}$ of
 interior minimum points of $\phi$ in $B_{\epsilon_k}(\hat{x})$ for which 
 $\nabla\phi(\hat{x}-\epsilon_k v_{\epsilon_k})=0$, a contradiction as $\epsilon_k\to0$.

It remains to show that, actually,
\begin{equation}\label{minimum_point.Lip.exts}
v_\epsilon= \frac{\nabla\phi(\hat{x})}{|\nabla\phi(\hat{x})|}+o(1)
\quad\textnormal{as}\ \ \epsilon\to0.
\end{equation}
Let $\omega$ be any fixed direction with $|\omega|=1$. Then, 
\[
\phi(\hat{x}-\epsilon v_\epsilon)=\min_{x\in\overline{B}_\epsilon(\hat{x})}\phi(x)\leq\phi(\hat{x}-\epsilon\, \omega),
\]
and  a Taylor expansion of $\phi$ around $\hat{x}$ gives 
\begin{equation*}
\left<\nabla\phi(\hat{x}),\, v_\epsilon\right>+o(1)\geq\frac{-\phi(\hat{x}-\epsilon\, \omega)+\phi(\hat{x})}{\epsilon}=\left<\nabla\phi(\hat{x}),\, \omega\right>+o(1)\quad\textnormal{as}\ \ \epsilon\to0.
\end{equation*}
Since the previous argument holds for any direction $\omega$, we  have    \eqref{minimum_point.Lip.exts} as desired.

The proof that  $\underline{u}$ is a viscosity solution of \eqref{Jenesen.max.equation} is similar.

To conclude, let us point out that in the case $\lambda < L_F(\partial\Omega)$ we can follow the argument above  and show that
the functions $\overline{u}$, $\underline{u}$ respectively satisfy the equations in \eqref{Jenesen.min.equation} and \eqref{Jenesen.max.equation} in the interior of $\Omega$. In fact, \eqref{McShane}, \eqref{Whitney} are still Lipschitz continuous with constant $\lambda$ in the interior of $\Omega$ by \eqref{proof.Lip.condition}. However, \eqref{proof.u=F} does not work and we can only say
 $\overline{u}\leq F\leq \underline{u}$ on $\partial\Omega$ (which holds by definition) and  $\overline{u}$, $\underline{u}$ fail to achieve the boundary condition.
\end{proof}

\bibliographystyle{amsplain}
\bibliography{references}

\providecommand{\bysame}{\leavevmode\hbox to3em{\hrulefill}\thinspace}
\providecommand{\MR}{\relax\ifhmode\unskip\space\fi MR }
\providecommand{\MRhref}[2]{%
  \href{http://www.ams.org/mathscinet-getitem?mr=#1}{#2}
}
\providecommand{\href}[2]{#2}
\begin{thebibliography}{1}

\bibitem{Anton.Charro.Wang}
Marcos Ant\'on, Fernando Charro, and Pei-Yong Wang, \emph{Totalitarian random
  tug-of-war games in graphs}, Comm. on Stochastic Analysis \textbf{13} (2019),
  no.~3.

\bibitem{aronsson1967}
Gunnar Aronsson, \emph{Extension of functions satisfying lipschitz conditions},
  Ark. Mat. \textbf{6} (1967), no.~6, 551--561.

\bibitem{aronsson2004tour}
Gunnar Aronsson, Michael Crandall, and Petri Juutinen, \emph{A tour of the
  theory of absolutely minimizing functions}, Bulletin of the American
  mathematical society \textbf{41} (2004), no.~4, 439--505.

\bibitem{barles1991}
Guy Barles and Panagiotis~E. Souganidis, \emph{{Convergence of approximation
  schemes for fully nonlinear second order equations}}, Asymptotic analysis
  \textbf{4} (1991), 271--283.

\bibitem{jensen1993}
Robert Jensen, \emph{{Uniqueness of Lipschitz extensions: minimizing the sup
  norm of the gradient}}, Archive for Rational Mechanics and Analysis
  \textbf{123} (1993), 51--74.

\bibitem{Lindqvist2014}
Peter Lindqvist, \emph{Notes on the infinity laplace equation}, Springer, 2016.

\bibitem{mcshane1934}
E.~J. McShane, \emph{Extension of range of functions}, Bull. Amer. Math. Soc.
  \textbf{40} (1934), no.~12, 837--842.

\bibitem{Oberman2006}
Adam~M. Oberman, \emph{{Convergent difference schemes for degenerate elliptic
  and parabolic equations: Hamilton--Jacobi equations and free boundary
  problems}}, SIAM Journal on Numerical Analysis \textbf{44} (2006), 879--895.

\bibitem{Whitney1934}
Hassler Whitney, \emph{Analytic extensions of differentiable functions defined
  in closed sets}, Transactions of the American Mathematical Society
  \textbf{36} (1934), no.~1, 63--89.

\end{thebibliography}

\end{document}